\newcommand{\Hom}{\mathbf{H}}
\newcommand{\Cycles}{\mathbf{Z}}
\newcommand{\Boundaries}{\mathbf{B}}
\newcommand{\Chains}{\mathbf{C}}
\newcommand{\C}{\mathbb{C}}
\newcommand{\co}{\colon\thinspace}
\newcommand{\curr}{E^\ast}
\newcommand{\desc}{\prec}
\newcommand{\dotcup}{\mathbin{\dot{\cup}}}
\newcommand{\parent}[1]{\hat{#1}}
\newcommand{\tri}{\mathfrak{T}}
\newcommand{\tv}{\mathrm{TV}}
\newcommand{\tw}{\mathrm{tw}}
\newcommand{\Z}{\mathbb{Z}}
\newcommand{\zmin}{z^-}
\newcommand{\zmax}{z^+}
\newcommand{\Q}{\mathbb{Q}}
\newtheorem{theorem}{Theorem}
\newtheorem{lemma}[theorem]{Lemma}
\newtheorem{corollary}[theorem]{Corollary}
\newtheorem{proposition}[theorem]{Proposition}
\newtheorem{algorithm}[theorem]{Algorithm}
\theoremstyle{definition}
\newtheorem*{definition}{Definition}
\newtheorem*{notation}{Notation}
\begin{document}
%

%
%

\title{Algorithms and complexity for \\ Turaev-Viro invariants}
\author{Benjamin A.\ Burton \and Cl{\'e}ment Maria \and
Jonathan Spreer\thanks{The University of Queensland, Brisbane QLD 4072, Australia, \texttt{bab@maths.uq.edu.au}, \texttt{c.maria@uq.edu.au}, \texttt{j.spreer@uq.edu.au}}}

\date{}
\maketitle

\begin{abstract}
The Turaev-Viro invariants are a powerful family of topological invariants
for distinguishing between different 3-manifolds.  They are
invaluable for mathematical software, but current algorithms to compute
them require exponential time.

The invariants are parameterised by an integer $r \geq 3$.
We resolve the question of complexity for $r=3$ and $r=4$, giving simple
proofs that computing Turaev-Viro invariants for $r=3$ is polynomial
time, but for $r=4$ is \#P-hard.  Moreover, we give an explicit
fixed-parameter tractable algorithm for arbitrary $r$, and show through
concrete implementation and experimentation that this algorithm is
practical---and indeed preferable---to the prior state of the art
for real computation.
\end{abstract}

\medskip
\noindent
{\bf Keywords}: {Computational topology, 3-manifolds, invariants,
    \#P-hard\-ness, parameterised complexity}

\section{Introduction}

In geometric topology, testing homeomorphism (topological equivalence)
is a fundamental algorithmic problem. However, beyond dimension two
it is remarkably difficult.
In dimension three---the focus of this paper---an algorithm follows
from Perelman's proof of
the geometrisation conjecture \cite{kleiner08-perelman}, but
it is extremely intricate, its complexity is unknown and it has never 
been implemented.

As a result, practitioners in computational topology rely on
simpler \emph{topological invariants}---computable properties
of a topological space that can be used to tell different spaces apart.
One of the best known invariants is homology,
but for 3-manifolds (the
3-dimensional generalisation of surfaces) this is weak:
there are many topologically different 3-manifolds that homology
cannot distinguish.
Therefore major software packages in 3-manifold topology rely on
invariants that are stronger but more difficult to compute.

In the discrete setting, among the most useful invariants for
3-manifolds are the \emph{Turaev-Viro invariants} \cite{turaev92-invariants}.
These are analogous to the Jones polynomial for knots:
they derive from quantum field theory, but offer a much simpler
combinatorial interpretation that lends itself well to algorithms and
exact computation.
They are implemented in the major software packages \emph{Regina}
\cite{regina} and the \emph{Manifold Recogniser}
\cite{matveev03-algms,recogniser}, and they play a key role in
developing census databases, which are analogous to the well-known
dictionaries of knots \cite{burton07-nor7,matveev03-algms}.
Their main difficulty is that they are
slow to compute: current implementations \cite{regina,recogniser} are
based on backtracking searches, and require exponential time.

The purpose of this paper is threefold: (i)~to introduce the Turaev-Viro
invariants to the wider computational topology community;
(ii)~to understand the complexity of computing these invariants; and
(iii)~to develop new algorithms that are suitable for practical software.

The Turaev-Viro invariants
are parameterised by two integers $r$ and $q$, with $r \geq 3$;
we denote these invariants by $\tv_{r,q}$.
A typical algorithm for computing $\tv_{r,q}$ will take as input a
triangulated 3-manifold,
composed of $n$ tetrahedra attached along their triangular faces;
we use $n$ to indicate the input size.
For all known algorithms,
the difficulty of computing $\tv_{r,q}$ grows significantly as $r$
increases (but in contrast, the difficulty is essentially independent of $q$).

Our main results are as follows.

\begin{itemize}
\item
Kauffman and Lins \cite{kauffman91-tv} state that for $r=3,4$
one can compute $\tv_{r,q}$ via ``simple and efficient methods
of linear algebra'', but they give no details on either the
algorithms or the complexity.
We show here that in fact the situations for $r=3$ and $r=4$
are markedly different:
computing $\tv_{r,q}$ for orientable manifolds
and $r=3$ is polynomial time, but for $r=4$
is \#P-hard.

\item
We give an explicit algorithm for computing $\tv_{r,q}$ for general $r$
that is fixed-parameter tractable (FPT).  Specifically, for any fixed $r$
and any class of input triangulations whose dual graphs have bounded
treewidth, the algorithm has running time linear in $n$.
Furthermore, we show through comprehensive experimentation that this
algorithm is \emph{practical}---we implement it in the open-source
software package \emph{Regina} \cite{regina}, run it through exhaustive
census databases, and find that this new FPT algorithm is
comparable to---and often significantly faster than---the
prior backtracking algorithm.

\item
We give a new geometric interpretation of the formula for $\tv_{r,q}$,
based on systems of ``normal arcs'' in triangles.  This generalises
earlier observations of Kauffman and Lins for $r=3$ based on embedded
surfaces \cite{kauffman91-tv}, and offers an interesting potential
for future algorithms based on Hilbert bases.
\end{itemize}

The \#P-hardness result for $r=4$ is the first classical hardness
result for the Turaev-Viro invariants.\footnote{%
    For \emph{quantum} computation, approximating
    Turaev-Viro invariants is universal \cite{alagic10-tv}.}
However, the proofs for this and the polynomial-time $r=3$ result
are simple: the algorithm for $r=3$ derives from a known homological
formulation \cite{matveev03-algms}, and the result for $r=4$ adapts
Kirby and Melvin's NP-hardness proof for the
more complex Witten-Reshetikhin-Turaev invariants \cite{kirby91-witten}.

The FPT algorithm for general $r$ is significant in that it is not just
theoretical, but also practical---and indeed \emph{preferable}---for real
software.  It was previously known that computing $\tv_{r,q}$ is
FPT \cite{burton14-courcelle}, but that
prior result was purely existential, and
would lead to infeasibly large constants in the running time if
translated to a concrete algorithm.  More generally, FPT algorithms do not
always translate well into practical software tools, and this paper is
significant in giving the first demonstrably practical FPT algorithm
in 3-manifold topology.


\section{Preliminaries} \label{s-prelim}

Let $M$ be a closed 3-manifold.
A \emph{generalised triangulation} of $M$ is a collection of
$n$ abstract tetrahedra $\Delta_1,\ldots,\Delta_n$ equipped with
affine maps that identify (or ``glue together'') their
$4n$ triangular faces in pairs,
so that the underlying topological space is homeomorphic to $M$.

In particular, as a consequence of the face identifications,
it is possible that several vertices of the same tetrahedron may be
identified together (and likewise for edges and triangles).
Indeed, it is common in practical applications to
have a \emph{one-vertex triangulation}, in which all vertices of all
tetrahedra are identified to a common point.
In general, the $4n$ tetrahedron vertices are partitioned into
equivalence classes according to how they are identified together;
we refer to each such equivalence class as a single
\emph{vertex of the triangulation}, and likewise for edges and triangles.

Generalised triangulations are widely used across major 3-manifold
software packages.  They are (as the name suggests)
more general than simplicial complexes, which allows them to
express a rich variety of different 3-manifolds using very few tetrahedra.
For instance, with just $n \leq 11$ tetrahedra one can create
13\,400 distinct prime orientable 3-manifolds
\cite{burton11-genus,matveev03-algms}.

\subsection{The Turaev-Viro invariants}
\label{ssec:tv}

%
Let $\tri$ be a generalised triangulation of a closed 3-manifold $M$,
and let $r$ and $q$ be integers with $r \geq 3$, $0 < q < 2r$, and $\gcd(r,q)=1$.
We define the Turaev-Viro invariant $\tv_{r,q}(\tri)$ as follows.

Let $V$, $E$, $F$ and $T$ denote the set of vertices, edges,
triangles and tetrahedra respectively of the triangulation $\tri$.
Let $I = \{0, 1/2, 1, 3/2, \ldots, (r-2)/2\}$; note that $|I|=r-1$.
We define a \emph{colouring} of $\tri$ to be a map $\theta\co E \to I$;
that is, $\theta$ ``colours'' each edge of $\tri$ with an element
of $I$.  A colouring $\theta$ is
\emph{admissible} if, for each triangle of $\tri$, the three edges
$e_1$, $e_2$, and $e_3$ bounding the triangle satisfy:
\begin{itemize}
    \item the \emph{parity condition} $\theta(e_1)+\theta(e_2)+\theta(e_3)\in \Z$;
    \item the \emph{triangle inequalities}
        $\theta(e_1) \leq \theta(e_2) + \theta(e_3)$,
        $\theta(e_2) \leq \theta(e_1) + \theta(e_3)$, and
        $\theta(e_3) \leq \theta(e_1) + \theta(e_2)$; and
    \item the \emph{upper bound constraint}
        $\theta(e_1)+\theta(e_2)+\theta(e_3)\leq r-2$.
\end{itemize}
More generally, we refer to any triple $(i,j,k) \in I \times I \times I$
satisfying these three conditions as an \emph{admissible triple} of colours.

For each admissible colouring $\theta$ and
for each vertex $v \in V$, edge $e \in E$, triangle $f \in F$
or tetrahedron $t \in T$, we define \emph{weights}
$|v|_\theta, |e|_\theta, |f|_\theta, |t|_\theta \in \C$.

\bigskip
Our notation differs slightly from Turaev and Viro \cite{turaev92-invariants};
most notably, Turaev and Viro do not consider
triangle weights $|f|_\theta$, but instead incorporate an additional
factor of $|f|_\theta^{1/2}$ into each tetrahedron weight
$|t|_\theta$ and $|t'|_\theta$ for the two tetrahedra $t$ and $t'$ containing $f$.
This choice of notation simplifies the notation and avoids unnecessary
(but harmless) ambiguities when taking square roots.

Let $\zeta = e^{i \pi q / r} \in \C$.  Note that our conditions imply that
$\zeta$ is a $(2r)$th root of unity, and that $\zeta^2$ is a
\emph{primitive} $r$th root of unity; that is,
$(\zeta^2)^k \neq 1$ for $k=1,\ldots,r-1$.
For each positive integer $i$, we define
$[i] = (\zeta^i-\zeta^{-i})/(\zeta-\zeta^{-1})$ and,
as a special case, $[0] = 1$.
We next define the ``bracket factorial''
$[i]! = [i]\,[i-1] \ldots [0]$.
Note that $[r] = 0$, and thus $[i]! = 0$ for all $i \geq r$.

We give every vertex constant weight
\begin{equation*}
|v|_\theta = \frac{\left|\zeta-\zeta^{-1}\right|^2}{2r} ,
\end{equation*}
and to each edge $e$ of colour $i \in I$ (i.e.,
for which $\theta(e) = i$) we give the weight
\begin{equation*}
|e|_\theta = (-1)^{2i} \cdot [2i+1].
\end{equation*}
A triangle $f$ whose three edges have colours $i,j,k \in I$ is assigned the
weight
\[ |f|_\theta = (-1)^{i+j+k} \cdot
    \frac{[i+j-k]! \cdot [i+k-j]! \cdot [j+k-i]!}{[i+j+k+1]!}. \]
Note that the parity condition and triangle inequalities ensure that
the argument inside each bracket factorial is a non-negative integer.

\begin{figure}
    \begin{center}
        \includegraphics[width = .15\textwidth]{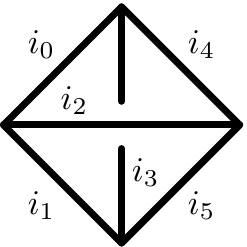}
        \caption{Edge colours of a tetrahedron. \label{fig:tet}}
    \end{center}
\end{figure}

Finally, let $t$ be a tetrahedron with edge colours
$i_0,i_1,i_2,i_3,i_4,i_5$ as indicated in Figure~\ref{fig:tet}. In particular,
the four triangles surrounding $t$ have colours
$(i_0,i_1,i_3)$, $(i_0,i_2,i_4)$, $(i_1,i_2,i_5)$ and $(i_3,i_4,i_5)$,
and the three pairs of opposite edges have colours
$(i_0,i_5)$, $(i_1,i_4)$ and $(i_2,i_3)$.  We define
\begin{align*}
\tau_\phi(t,z) &=
    [z-i_0-i_1-i_3]! \cdot [z-i_0-i_2-i_4]! \cdot
    [z-i_1-i_2-i_5]! \cdot [z-i_3-i_4-i_5]!\,, \\
\kappa_\phi(t,z) &= [i_0+i_1+i_4+i_5-z]! \cdot
                  [i_0+i_2+i_3+i_5-z]! \cdot
                  [i_1+i_2+i_3+i_4-z]!
\end{align*}
for all integers $z$ such that the bracket factorials above all have non-negative
arguments; equivalently, for all integers $z$ in the range
$\zmin \leq z \leq \zmax$ with
\begin{align*}
\zmin &= \max\{i_0+i_1+i_3,\ i_0+i_2+i_4,\ i_1+i_2+i_5,\ i_3+i_4+i_5\}\,; \\
\zmax &= \min\{i_0+i_1+i_4+i_5,\ i_0+i_2+i_3+i_5,\ i_1+i_2+i_3+i_4\}.
\end{align*}
Note that, as before, the parity condition ensures
that the argument inside each bracket factorial above is an integer.
We then declare the weight of tetrahedron $t$ to be
\begin{equation*}
|t|_\phi = \sum_{\zmin \leq z \leq \zmax}
    \frac{(-1)^z \cdot [z+1]!}{\tau_\phi(t,z) \cdot \kappa_\phi(t,z)},
\end{equation*}

\bigskip
Note that all weights are polynomials on $\zeta$ with rational coefficients, 
where $\zeta = e^{i \pi q/r}$.

Using these weights, we define the \emph{weight of the colouring} to be
\begin{equation}
|\tri|_\theta =
    \prod_{v \in V} |v|_\theta \times
    \prod_{e \in E} |e|_\theta \times
    \prod_{f \in F} |f|_\theta \times
    \prod_{t \in T} |t|_\theta,
\label{eqn-cweight}
\end{equation}
and the Turaev-Viro invariant to be the sum
over all admissible colourings
\[ \tv_{r,q}(\tri) = \sum_{\theta\ \mathrm{admissible}} |\tri|_\theta. \]

In \cite{turaev92-invariants}, Turaev and Viro
show that $\tv_{r,q}(\tri)$ is indeed
an invariant of the manifold; that is, 
if $\tri$ and $\tri'$ are generalised triangulations of the same
closed 3-manifold $M$, then $\tv_{r,q}(\tri) = \tv_{r,q}(\tri')$
for all $r,q$.
Although $\tv_{r,q}(\tri)$ is defined on the complex numbers $\C$,
it always takes a real value
(more precisely, it is the square of the modulus of a
Witten-Reshetikhin-Turaev invariant) \cite{walker91-notes}.

\subsection{Treewidth and parameterised complexity}

Throughout this paper we always refer to \emph{nodes} and \emph{arcs}
of graphs, to clearly distinguish these from the
\emph{vertices} and \emph{edges} of triangulations.

Robertson and Seymour introduced the concept of the \emph{treewidth} of a
graph \cite{robertson86-algorithmic}, which now plays
a major role in parameterised complexity. Here, we adapt this concept to
triangulations in a straightforward way.

\begin{definition}
Let $\tri$ be a generalised triangulation of a 3-manifold, and let
$T$ be the set of tetrahedra in $\tri$.
A \emph{tree decomposition} $(X, \{B_\tau\})$ of $\tri$ consists of
a tree $X$ and
{\em bags} $B_\tau \subseteq T$ for each node $\tau$ of $X$, for which:
\begin{itemize}
    \item each tetrahedron $t \in T$ belongs to some bag $B_\tau$;
    \item if a face of some tetrahedron $t_1 \in T$ is identified with
    a face of some other tetrahedron $t_2 \in T$,
    then there exists a bag $B_\tau$ with $t_1,t_2 \in B_\tau$;
    \item for each tetrahedron $t \in T$,
    the bags containing $t$ correspond to a connected subtree of $X$.
\end{itemize}
The \emph{width} of this tree decomposition is defined as $\max |B_\tau|-1$.
The \emph{treewidth of $\tri$}, denoted $\tw(\tri)$,
is the smallest width of any tree decomposition of $\tri$.
\end{definition}

The relationship between this definition and the
classical graph-theoretical notion of
treewidth is simple: $\tw(\tri)$ is the treewidth of the
\emph{dual graph} of $\tri$, the $4$-valent multigraph whose nodes
correspond to tetrahedra of $\tri$ and whose arcs represent
pairs of tetrahedron faces that are identified together.

Figure~\ref{fig:example} shows the dual graph of a $9$-tetrahedra 
triangulation of a $3$-manifold, along with a possible tree decomposition.
The largest bags have size three, and so the width of this tree
decomposition is $3-1=2$.

\begin{figure}[tb]
\centering
\includegraphics[width=.9\textwidth]{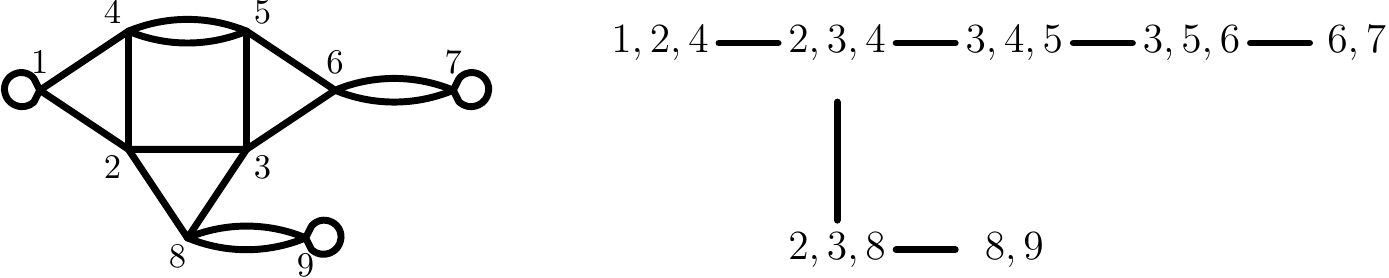}
\caption{The dual graph and a tree decomposition
    of a $3$-manifold triangulation}
\label{fig:example}
\end{figure}


\begin{definition}
\label{def:nice_treewidth} 
A \emph{nice tree decomposition} of a generalised triangulation $\tri$
is a tree decomposition $(X, \{B_\tau\})$ of $\tri$ whose
underlying tree $X$ is rooted, and where:
	\begin{itemize}
	\item The bag $B_\rho$ at the root of the tree is empty
    ($B_{\rho}$ is called the {\em root bag});
	\item If a bag $B_{\tau}$ has no children, then $|B_{\tau}|=1$ 
    (such a $B_{\tau}$ is called a {\em leaf bag});
	\item If a bag $B_{\tau}$ has two children $B_{\sigma}$ and $B_{\mu}$, then 
    $B_{\tau} = B_{\sigma} = B_{\mu}$ (such a $B_{\tau}$ is called a 
    {\em join bag});
	\item Every other bag $B_{\tau}$ has precisely one child $B_{\sigma}$,
    and either:
		\begin{itemize}
			\item $|B_{\tau}| = |B_{\sigma}| + 1$ and $B_{\tau} \supset B_{\sigma}$ 
        (such a $B_{\tau}$ is called an {\em introduce bag}), or
			\item $|B_{\tau}| = |B_{\sigma}| - 1$ and $B_{\tau} \subset B_{\sigma}$ 
        (such a $B_{\tau}$ is called a {\em forget bag}).
		\end{itemize}
	\end{itemize}
\end{definition}

Given a tree decomposition of a triangulation $\tri$ of width 
$k$ and $O(n)$ bags,
we can convert this in $O(n)$ time
into a \emph{nice} tree decomposition of $\tri$ that also has width $w$ and
$O(n)$ bags \cite{kloks1994treewidth}.

\section{Algorithms for computing Turaev-Viro invariants} \label{s-alg}

All of the algorithms in this paper use exact arithmetic.
This is crucial if we wish to avoid floating-point numerical instability,
since computing $\tv_{r,q}$ may involve exponentially many
arithmetic operations.

We briefly describe how this exact arithmetic works.
Since all weights in the definition of $\tv_{r,q}$ are
rational polynomials in $\zeta = e^{i \pi q/r}$,
all arithmetic operations remain within the rational
field extension $\Q(\zeta)$. If $\zeta$ is a primitive $n$th root of unity
then this field extension is called the \emph{$n$th cyclotomic field}.
This in turn is isomorphic to the polynomial field 
$\Q[X] / \Phi_n(X)$, where $\Phi_n(X)$ is the $n$th cyclotomic
polynomial with degree $\varphi(n)$ (Euler's totient function).
Therefore we can implement exact arithmetic using degree $\varphi(n)$
polynomials over $\Q$.

If $r$ is odd and $q$ is even, then $\zeta$ is a primitive
$r$th root of unity, and $\Q(\zeta) \cong \Q[X] / \Phi_r(X)$.
Otherwise $\zeta$ is a primitive $(2r)$th root of unity,
and $\Q(\zeta) \cong \Q[X] / \Phi_{2r}(X)$.
In this paper we give our complexity results in terms of
arithmetic operations in
$\Q(\zeta)$.

\bigskip
Let $\zeta$ be an $n$th root of unity and $\Q(\zeta)$ be the $n$th cyclotomic field. 
We represent elements of $\Q(\zeta)$ by polynomials of degree at most $\varphi(n)$, 
with rational coefficients, using the isomorphism $\Q(\zeta) \cong \Q[X] / \Phi_n(X)$. 
Asymptotically, the Euler totient function satisfies $\varphi(n) = \Theta(n)$. 
Additions of two polynomials of degree at most $n$ 
are performed in $O(n)$ operations in $\Q$, and multiplications and divisions are 
performed in $O(M(n))$ operations in $\Q$, with $M(n) = 
O(n \log n \log\log n)$~\cite{DBLP:journals/acta/CantorK91}. 

Hence, for fixed $r$, Turaev-Viro invariants can be computed in 
$O(N(r) \cdot  r \log r \log\log r)$ operations in $\Q$ 
using exact arithmetic over cyclotomic fields, where $N(r)$ denotes the number 
of arithmetic operations needed to compute $\tv_{r,q}$.

\subsection{The backtracking algorithm for computing $\tv_{r,q}$}

There is a straightforward but slow algorithm to compute $\tv_{r,q}$
for arbitrary $r,q$. 
The core idea is to use a backtracking algorithm to
enumerate all admissible colourings of edges,
and compute and sum their weights.
Both major software packages that compute
Turaev-Viro invariants---the \emph{Manifold Recogniser} \cite{recogniser}
and \emph{Regina} \cite{regina}---currently employ optimised variants of this.

Let $\tri$ be a $3$-manifold 
triangulation, with $\ell$ edges $e_1, \ldots , e_\ell$.
A simple Euler characteristic argument gives $\ell = n + v$ where $n$
is the number of tetrahedra and $v$ is the number of vertices in $\tri$.
Therefore $\ell \in \Theta(n)$. 

To enumerate colourings, since each edge admits $r-1$ possible colours,
the backtracking algorithm traverses a search tree of $O((r-1)^\ell)$ nodes:
a node at depth $i$ 
corresponds to a partial colouring of the edges $e_1, \ldots , e_i$,
and each non-leaf node has $r-1$ children (one edge per colour).
Each leaf of the tree represents a 
(possibly not admissible) colouring of all the edges.
At each node we maintain the weight of the current partial colouring, and 
update this weight as we traverse the tree.
If we reach a leaf whose colouring is admissible,
we add this weight to our total.


\begin{lemma}
\label{lem:cpxbacktracking}
If we sort the edges $e_1, \ldots ,e_\ell$ by decreasing degree, the backtracking 
algorithm terminates in $O((r-1)^\ell)$ arithmetic operations in
$\Q(\zeta)$.
\end{lemma}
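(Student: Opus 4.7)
My plan is a standard cost analysis of the backtracking recursion tree, done in two stages: bound the number of nodes, then bound the work per node.

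For the node count, observe that at depth $i$ each node represents a partial colouring of $e_1,\ldots,e_i$, so the branching factor is at most $r-1$ and there are at most $(r-1)^i$ nodes at that depth. Summing the geometric series gives $\sum_{i=0}^\ell (r-1)^i = O((r-1)^\ell)$ nodes overall.

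For the per-node work, I would maintain the running weight of the current partial colouring incrementally as the algorithm walks the tree. When descending from a depth-$(i-1)$ node to one of its children by assigning a colour to $e_i$, we (i) multiply the running weight by $|e_i|_\theta$, and (ii) for each triangle $f$ (respectively tetrahedron $t$) whose \emph{last} edge in the ordering $e_1,\ldots,e_\ell$ is $e_i$, we test the parity, triangle-inequality and upper-bound conditions on $f$ (resp.\ $t$), prune this child if any of them fail, and otherwise multiply in $|f|_\theta$ (resp.\ $|t|_\theta$). Each triangle and each tetrahedron is ``completed'' at a unique depth along any root-to-leaf path, so this bookkeeping assigns $O(1)$ arithmetic operations in $\Q(\zeta)$ to each visited node.

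The role of sorting $e_1,\ldots,e_\ell$ by decreasing degree is twofold. First, the high-degree edges participate in many admissibility constraints, so fixing them first causes inadmissible branches to be pruned close to the root where a single cut eliminates many descendants. Second, and more importantly for the analysis, it controls how the ``firings'' of completed triangles and tetrahedra distribute across depths, so that the per-node update cost remains $O(1)$ rather than scaling with edge degree.

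The main obstacle I anticipate is making the amortization genuinely clean: a high-degree edge $e_i$ is contained in many triangles and tetrahedra, and a naive bound would charge $\Theta(n)$ work at every one of the $(r-1)^i$ nodes at depth $i$, leaking an extra factor of $n$. The fix is to charge each triangle and each tetrahedron only at the unique depth $i(f)$ (resp.\ $i(t)$) where its last edge is coloured, and then observe that the total arithmetic cost decomposes as $\sum_{f\in F}(r-1)^{i(f)} + \sum_{t\in T}(r-1)^{i(t)} + O((r-1)^\ell)$, which is dominated by the tree-size term and yields the claimed $O((r-1)^\ell)$ bound.
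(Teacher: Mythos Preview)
Your overall plan is sound and close to the paper's, but there is a genuine gap at the very end. You assert that
\[
\sum_{f\in F}(r-1)^{i(f)} \;+\; \sum_{t\in T}(r-1)^{i(t)} \;+\; O\bigl((r-1)^\ell\bigr)
\]
``is dominated by the tree-size term'', yet you give no justification, and it is not automatic: if many faces share the last edge $e_\ell$ then the first sum contains many copies of $(r-1)^\ell$, and a priori the total could be $\Theta\bigl(n\,(r-1)^\ell\bigr)$. Likewise your earlier claim that the bookkeeping assigns $O(1)$ work to each visited node is not literally true---a depth-$i$ node may trigger $\Theta(\deg(e_i))$ completed triangles and tetrahedra.

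The missing ingredients are exactly the two facts the paper's proof invokes: (i) in a closed 3-manifold triangulation the \emph{average} edge degree is at most $6$ (an Euler-characteristic count gives $\sum_i \deg(e_i)=6n\leq 6\ell$); and (ii) Chebyshev's sum inequality. Grouping your sum by depth, the number of triangles (or tetrahedra) whose last edge is $e_i$ is at most $\deg(e_i)$, so $\sum_f (r-1)^{i(f)} \leq \sum_i \deg(e_i)\,(r-1)^i$. Because the $e_i$ are sorted by \emph{decreasing} degree, $\deg(e_i)$ is non-increasing while $(r-1)^i$ is increasing, and Chebyshev gives
\[
\sum_i \deg(e_i)\,(r-1)^i \;\leq\; \frac{1}{\ell}\Bigl(\sum_i \deg(e_i)\Bigr)\Bigl(\sum_i (r-1)^i\Bigr) \;\leq\; 6\sum_i (r-1)^i \;=\; O\bigl((r-1)^\ell\bigr).
\]
This is precisely where the sorting hypothesis enters the \emph{analysis} (not merely as a pruning heuristic); without it the bound can genuinely lose a factor of $n$. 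Once you add these two observations your argument is complete and essentially coincides with the paper's.
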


\begin{proof}
The proof is simple. The main complication is to ensure that updating
the weight of the current partial colouring takes amortised constant time.
For this we use Chebyshev's inequality, plus the observation that the
average edge degree is $\leq 6$. 

In more detail, suppose that the edges $e_1, \ldots ,e_\ell$ are ordered by decreasing degree. 
Let $\deg(e_i)$ be the degree of edge $e_i$. Changing the colour of $e_i$ affects 
the colours of the $\deg(e_i)$ triangles and $\deg(e_i)$ tetrahedra containing $e_i$. Hence 
the update of the current partial colouring weight is performed in $O(\deg(e_i))$ 
arithmetic operations in $\Q(\zeta)$. The total number of arithmetic operations 
performed by the algorithm is consequently $O(\sum_i (r-1)^i \deg(e_i))$.  Following an
Euler characteristic argument, a 
triangulation of a closed $3$-manifold with $\ell$ edges and $v$ vertices has 
$n=\ell-v$ tetrahedra and, consequently, the average degree of an edge is 
$6(\ell-v)/\ell$ and thus constant. 
Considering that the 
sequence $((r-1)^i)_i$ is increasing and $\deg e_i$ is decreasing, we conclude using 
Chebyshev's sum inequality that
$$O\left(\sum_i (r-1)^i \deg(e_i)\right) = O\left(\sum_i (r-1)^i\right) = 
O\left((r-1)^\ell\right)$$ 
\end{proof}

To obtain a bound in the number of tetrahedra $n$,
we note that a closed and connected
3-manifold triangulation with $n > 2$ tetrahedra must have
$v \leq n+1$ vertices. 
Combined with $n=\ell-v$ above, we have a worst-case running time of
$O((r-1)^{2n+1})$ arithmetic operations in $\Q(\zeta)$.

\subsection{A polynomial-time algorithm for $r=3$}

Throughout this section, $\tri$ will denote an $n$-tetrahedra triangulation of 
an orientable $3$-manifold $M$.

We introduce homology with coefficients in the field $\Z_2$. 
A generalised triangulation $\tri$, after gluing, contains a set of vertices 
(dimension $0$), edges (dimension $1$), 
triangles (dimension $2$) and tetrahedra (dimension $3$) with incidence relations. 
The 
group of $d$-chains of $\tri$, $d \in \{0, \ldots ,3\}$, denoted by $\Chains_d(\tri,\Z_2)$ 
is the group of formal sums of
$d$-simplices with $\Z_2$ coefficients. The $d$th \emph{boundary operator} is a linear operator
$\partial_d: \Chains_d(\tri,\Z_2) \rightarrow \Chains_{d-1}(\tri,\Z_2)$ that assigns 
to a $d$-face the alternate 
sum of its boundary faces. The kernel of $\partial_d$, 
denoted by $\Cycles_d(\tri,\Z_2)$, is the group of \emph{$d$-cycles} and the image of $\partial_d$, 
denoted by $\Boundaries_{d-1}(\tri,\Z_2)$, is the group of \emph{$(d-1)$-boundaries}. 
The fundamental property of homology is that $\partial_d \circ \partial_{d+1}=0$, 
and we define the $d$th homology group
$\Hom_d(\tri,\Z_2)$ of $\tri$ to be the quotient $\Hom_d(\tri,\Z_2) =
\Cycles_d(\tri,\Z_2)/\Boundaries_d(\tri,\Z_2)$. It is a finite dimensional 
$\Z_2$-vector space and we denote its dimension by $\beta_d(\tri,\Z_2)$. 
For a more thorough introduction into homology theory, see~\cite{DBLP:books/daglib/0070579}.

The value of $\tv_{3,q}(\tri)$, $q \in \{1,2\}$, 
is closely related to $\Hom_2(M,\Z_2)$, the $2$-dimensional homology group
of $M$ with $\Z_2$ coefficients.
$\Hom_2(M, \Z_2)$ is a 
$\Z_2$-vector space whose dimension is the second Betti number 
$\beta_2 (M, \Z_2)$. 
Its elements are (for our purposes) equivalence classes of $2$-cycles, called \emph{homology classes}, 
which can be represented 
by $2$-dimensional triangulated surfaces $S$ embedded in $\tri$.

The \emph{Euler characteristic} of a triangulated surface $S$, denoted by $\chi(S)$, 
is $\chi(S) = v-e+f$, where $v$, $e$ and $f$ denote the 
number of vertices, edges and triangles of $S$ respectively. We define the Euler characteristic 
$\chi(c)$ of a 
$2$-cycle $c$ to be the Euler characteristic of the embedded surface it represents. 
Given $\tri$, the dimension $\beta_2 (M, \Z_2)$ of $\Hom_2(M,\Z_2)$ 
may be computed in $O(\text{poly}(n))$ operations.

The following result is well known \cite{matveev03-algms}:

\begin{proposition}[Proposition 8.1.7 in \cite{matveev03-algms}]
\label{prop:matveevr3}
Let $M$ be a closed orientable $3$-manifold. Then
$\tv_{3,2}(M)$ is equal to the order of $\Hom_2(M,\Z_2)$.
Moreover, if $M$ contains a $2$-cycle with odd Euler characteristic then
$\tv_{3,1}(M)=0$, and otherwise $\tv_{3,1}(M) = \tv_{3,2}(M)$.
\end{proposition}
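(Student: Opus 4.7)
The plan is to reduce the Turaev--Viro sum at $r=3$ to a sum indexed by $\Z_2$-valued $1$-cocycles on $\tri$, and then evaluate it case by case.  Enumerating the parity, triangle-inequality and upper-bound conditions for $r=3$ (with $I=\{0,1/2,1\}$) shows that the only admissible triples are $(0,0,0)$ and permutations of $(0,1/2,1/2)$; in particular the colour $1$ never appears, so every admissible colouring takes values in $\{0,1/2\}$ with an even number of $1/2$-edges at each triangle.  Sending $0 \mapsto 0$ and $1/2 \mapsto 1$ gives a bijection between admissible colourings and the $\Z_2$-valued $1$-cocycles $Z^1(\tri,\Z_2)$.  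Each such cocycle $a$ has a canonical closed-surface representative $S_a$ obtained by placing one normal disk (a vertex-linking triangle or a quadrilateral) inside each tetrahedron with nontrivial restriction; counting $0$-, $1$-, and $2$-cells gives
\[
\chi(S_a) \;=\; |E(a)| - |F(a)| + |T(a)|,
\]
where the three quantities count $1/2$-edges, active triangles (those with two $1/2$-edges), and active tetrahedra.  The Poincar\'e dual of $[a]$ is the class $[S_a] \in \Hom_2(M,\Z_2)$.

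At $r=3$ the weights evaluate cleanly: $|v|_\theta = 1/2$, $|e|_\theta \in \{1,-[2]\}$, $|f|_\theta \in \{1,-1/[2]\}$, and (after checking that $\zmin = \zmax$ in each of the seven nontrivial tetrahedral configurations) $|t|_\theta \in \{1,-[2]\}$.  Since $[2]=-1$ for $q=2$ and $[2]=1$ for $q=1$, the $q=2$ case reduces every weight to $+1$, yielding $\tv_{3,2}(\tri) = (1/2)^{|V|}\,|Z^1(\tri,\Z_2)|$; combining $|Z^1| = 2^{|V|-1}\,|H^1(\tri,\Z_2)|$ with the Poincar\'e duality isomorphism $H^1(\tri,\Z_2) \cong \Hom_2(M,\Z_2)$ then recovers the asserted formula.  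For $q=1$ each nontrivial weight is $-1$, so the overall sign of $|\tri|_a$ is $(-1)^{|E(a)|+|F(a)|+|T(a)|} = (-1)^{\chi(S_a)}$, and
\[
\tv_{3,1}(\tri) \;=\; (1/2)^{|V|} \sum_{a \in Z^1(\tri,\Z_2)} (-1)^{\chi(S_a)}.
\]

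The crux is to show that $a \mapsto \chi(S_a) \bmod 2$ is a $\Z_2$-linear functional on $Z^1(\tri,\Z_2)$: granted this, the sum equals $|Z^1|$ when the functional vanishes (equivalently, when every $2$-cycle has even Euler characteristic, so $\tv_{3,1} = \tv_{3,2}$) and $0$ otherwise ($\tv_{3,1} = 0$), matching both branches of the proposition.  I would derive the linearity from the cohomological description: the surface Wu formula gives $\chi(S) \equiv \langle w_1(\nu_S)^2, [S] \rangle \pmod 2$ for any closed surface $S$ Poincar\'e dual to $\gamma \in H^1(M,\Z_2)$, and orientability of $M$ forces $w_1(\nu_S) = i^*\gamma$, hence $\chi(S_\gamma) \equiv \langle \gamma^3, [M] \rangle$.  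The map $\gamma \mapsto \gamma^3$ is $\Z_2$-linear because $\mathrm{Sq}^1(\alpha \cup \beta) = \alpha^2\beta + \alpha\beta^2 = 0$ in $H^3(M,\Z_2)$, an instance of Wu's formula on $M$ with $v_1 = w_1(M) = 0$.  This linearity step is the main obstacle---it is the only place orientability of $M$ is essential, and it is what collapses the $q=1$ sum to the simple $0$-or-$\tv_{3,2}$ dichotomy rather than a generic Gauss-sum expression.
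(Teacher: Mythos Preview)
The paper does not actually prove this proposition: it is quoted verbatim from Matveev's book and used as a black box to derive the polynomial-time algorithm.  So there is no ``paper's proof'' to compare against; you have supplied a proof where the paper supplies only a citation.

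Your approach is the standard one and is essentially correct.  A couple of small points are worth cleaning up.  First, for $r=3$ the colour set is $I=\{0,1/2\}$ (since $|I|=r-1=2$), not $\{0,1/2,1\}$; your conclusion that only $(0,0,0)$ and permutations of $(0,1/2,1/2)$ are admissible is still right, so nothing downstream is affected.  Second, and more substantively, your bookkeeping in the $q=2$ case does not close: with $|v|_\theta=1/2$ and $|Z^1|=2^{|V|-1}|H^1|$ you obtain
\[
(1/2)^{|V|}\cdot 2^{|V|-1}\,|H^1(M,\Z_2)| \;=\; \tfrac12\,|H^1(M,\Z_2)|,
\]
which is off by a factor of $2$ from the stated $|\Hom_2(M,\Z_2)|$.  (A sanity check on $\partial\Delta^4$, with $|V|=5$ and $|Z^1|=16$, gives $\tv_{3,2}(S^3)=1/2$ under the paper's weight conventions.)  This is a normalisation discrepancy between the paper's vertex weight and Matveev's; you should flag it rather than claim the formula is ``recovered'' as stated.

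For the $q=1$ linearity step, note that your Wu--formula argument lives on $H^1$, whereas the sum ranges over $Z^1$.  You should make explicit the bridge: the parity $\chi(S_a)\bmod 2$ depends only on the cohomology class $[a]$ (any $a\in B^1$ is the coboundary of a vertex set, so $S_a$ is a disjoint union of vertex-link $2$-spheres and $\chi(S_a)$ is even), so the functional descends to $H^1$, where your $\gamma\mapsto\langle\gamma^3,[M]\rangle$ argument applies.  With that said, the overall strategy---identify admissible colourings with $Z^1(\tri,\Z_2)$, read off the weights, interpret the sign as $(-1)^{\chi}$ of the dual normal surface, and invoke linearity of $\chi\bmod 2$ via orientability---is exactly the classical route and is sound.
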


Since $\Hom_2 (M, \Z_2)$ is a $\Z_2$-vector space of dimension $\beta_2(M,\Z_2)$,
we have $\tv_{3,2}(M) = 2^{\beta_2(M,\Z_2)}$, and one can compute $\tv_{3,2}(M)$ in 
polynomial time. The parity of the Euler characteristic of $2$-cycles does not 
change within a homology class; moreover, if $M$ is orientable, the map
$\Hom_2 (M, \Z_2) \to \Z_2$, taking homology classes to the parity of their 
Euler characteristic, is a homomorphism.  
Consequently, one can check whether $\tv_{3,1}(M)=0$ or $\tv_{3,1}(M) = \tv_{3,2}(M)$ 
by computing the Euler characteristic of a cycle in each of the $\beta_2(M,\Z_2)$ 
homology classes that generate $\Hom_2(M,\Z_2)$. Because $\beta_2(M,\Z_2) = O(n)$, this leads 
to a polynomial time algorithm also.

%

\subsection{$\#P$-hardness of $\tv_{4,1}$}

The complexity class $\#P$ is a function class that counts accepting 
paths of a non-deterministic Turing machine~\cite{DBLP:journals/tcs/Valiant79}. 
Informally, given an \emph{NP} decision problem $C$  
asking for the existence of a solution, its $\#P$ analogue $\#C$ 
is a counting problem asking for the number of such solutions. 
A problem is \emph{$\#P$-hard} if every problem in $\#P$ 
polynomially reduces to it. For example, the problem $\#3SAT$, which asks for
the number of satisfying assignments of a $3CNF$ formula, is $\#P$-hard. 

Naturally, counting problems are ``harder'' than their decision
counterpart,
and so $\#P$-hard problems are at least as hard as $NP$-complete
problems---specifically, $\#P$ complete problems are as hard as any 
problem in the polynomial hierarchy
\cite{DBLP:journals/siamcomp/Toda91}.
Hence proving $\#$P hardness 
is a strong complexity statement.

Kirby and Melvin \cite{kirby04-nphard} prove that computing the
Witten-Reshetikhin-Turaev invariant $\tau_r$ is $\#P$ hard for $r=4$.
This invariant $\tau_r$ is 
a more complex $3$-manifold invariant which is closely linked to the 
Turaev-Viro invariant $\tv_{r,1}$ by the formula
$\tv_{r,1}(M) = |\tau_r(M)|^2$.
Although computing $\tv_{r,1}$ is ``easier'' than computing $\tau_r$, the 
we can adapt the Kirby-Melvin hardness proof to fit our purposes. 

To prove their result, Kirby and Melvin reduce the problem of counting the zeros of a cubic 
form to the 
computation of $\tau_4$. Given a cubic form 
$$c(x_1, \ldots ,x_n) = \sum_i c_i\ x_i + \sum_{i,j} c_{ij}\  x_i x_j +
\sum_{i,j,k} c_{ijk}\ x_i x_j x_k$$
in $n$ variables over $\Z/2\Z$ and with $\#c$ zeros, they define a triangulation of a $3$-manifold 
$M_c$ with $O(\mathrm{poly}(n))$ tetrahedra satisfying
$\tau_4(M_c) = 2 \#c - 2^n$ and hence $\tv_{r,1} = (2 \#c - 2^n)^2$.

Consequently, counting the zeros of $c(x_1, \ldots ,x_n)$ reduces to computing 
$\tau_4(M_c)$, and so computing $\tv_{4,1}$ determines $\#c$ up to a 
$\pm$ sign ambiguity (depending on whether or not $c$ admits more than half of the input as zeros).

Establishing the existence of a zero for a cubic form is an \emph{NP}-complete problem,
which implies that counting the number of zeros is $\#P$ complete. Consequently, 
computing $\tau_4$ is $\#P$ hard. Kirby and Melvin prove this claim explicitly by reducing 
$\#3SAT$ to the problem of counting the zeros of a cubic form;
moreover, we observe that their construction ensures that
this cubic form admits
\emph{more than half of its inputs as zeros}.

\bigskip
We recall the reduction of $\#3SAT$ to the problem of counting the number of 
zeros of a cubic form over $\Z/2\Z$ found in~\cite{kirby04-nphard}. 
Given a $3CNF$ formula over variables $x_1, \ldots , x_n$:
$$C = C_1 \wedge \ldots \wedge C_m \ \ \ \text{with} \ \ \ 
    C_j = \bar{x}_{j_1} \vee \bar{x}_{j_2} \vee \bar{x}_{j_3}$$
and $\bar{x}_i$ is either $x_i$ or its negation $\lnot x_i$ the problem $\#3SAT$ 
consists in counting the number of assignments of "true" and "false" to 
the variables $x_1, \ldots ,x_n$ satisfying the formula.

To each form $C_j = \bar{x}_{j_1} \vee \bar{x}_{j_2} \vee \bar{x}_{j_3}$ we assign 
a cubic equation $q_j$ over $\Z/2\Z$ by setting $"\text{true}" = 0$ and 
$"\text{false}" = 1$, and 
replacing $x_i$ by the variable $x_i$ and $\lnot x_i$ by $(1-x_i)$. For example, 
a form $(\lnot x_i \wedge \lnot x_j \wedge x_k)$ leads to the equation 
$(1-x_i)(1-x_j)x_k =  0$. An assignment satisfies $C_j$ if and only if it cancels $q_j$, hence the 
number of solutions to the system of equations 
$\{ q_1(x_1, \ldots ,x_n) = 0, \ldots ,q_m(x_1,\ldots,x_n) = 0\}$ is equal to 
$\#c$, the number of satisfying assignments for $C$.

We turn each cubic equation into two quadratic equations by introducing a new variable 
$x_{ij}$ for each monomial $x_i x_j x_k$ of degree $3$ and a new quadratic equation 
$x_{ij} - x_i x_j=0$, and by replacing the product $x_i x_j x_k$ by $x_{ij} x_k$. 
We obtain a set of $m'$ equations $\{\bar{q_1}(x_1, \ldots ,x_n) = 0, \ldots ,
\bar{q_{m'}}(x_1,\ldots,x_{n'}) = 0\}$ in $n'$ variables over $\Z/2\Z$, with 
$m < m' \leq 2m$ and $n < n' \leq 2n$. 
The number of solutions of this system remains $\#c$.

Finally, we define the following cubic form $c$ by introducing $m'$ extra variables 
$z_1, \ldots ,z_{m'}$:
\[
Q = \sum_{i=1}^{m'}z_i \bar{q_i}(x_1, \ldots ,x_{n'})
\]
The number of zeros of $Q$ is equal to $2^{m'} \#c + 2^{m'-1}(2^{n'}-\#c) 
\geq \frac{1}{2} 2^{n'+m'}$. Because $Q$ is defined on $n'+m'$ variables it 
admits more than half of its input as zeros. Finally, $\#3SAT$ reduces to counting 
the number of zeros of a cubic form which admits at least half of its input as zeros.

Thus the same reduction process as for $\tau_4$ applies for $\tv_{4,1}$,
and so:

\begin{corollary}
Computing $\tv_{4,1}$ is $\#P$ hard.
\end{corollary}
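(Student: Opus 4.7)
The plan is to combine the Kirby--Melvin reduction from $\#3SAT$ to computing $\tau_4$ with the identity $\tv_{4,1}(M) = |\tau_4(M)|^2$, and to verify that the squaring poses no obstruction for the specific manifolds arising from this reduction. All the ingredients have been laid out above; what remains is to confirm that the composition is a polynomial-time counting reduction from $\#3SAT$ to the computation of $\tv_{4,1}$.

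Concretely, given a $\#3SAT$ instance $C$ on $n$ variables and $m$ clauses, I would first apply the two-stage construction above to produce a cubic form $Q$ on $n' + m' = O(n + m)$ variables satisfying
\[
\#Q \;=\; 2^{m'} \#c + 2^{m'-1}\bigl(2^{n'} - \#c\bigr),
\]
equivalently $2\#Q - 2^{n'+m'} = 2^{m'} \#c$. Next, I would apply the Kirby--Melvin construction to $Q$ to obtain, in polynomial time, a triangulation of a closed $3$-manifold $M_Q$ with $O(\poly(n+m))$ tetrahedra and $\tau_4(M_Q) = 2\#Q - 2^{n'+m'}$. Finally, I would use the identity
\[
\tv_{4,1}(M_Q) \;=\; |\tau_4(M_Q)|^2 \;=\; 2^{2m'} (\#c)^2,
\]
from which $\#c$ is recovered by a single integer square root followed by a division by $2^{m'}$, both in polynomial time.

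The only subtlety---and the sole point where this proof departs from the one for $\tau_4$---is the sign ambiguity introduced by the squaring: a priori, $\tv_{4,1}(M_Q)$ determines only $|2\#Q - 2^{n'+m'}|$, leaving two candidate values for $\#Q$ and hence for $\#c$. This is precisely why one cannot simply re-use Kirby--Melvin's reduction from a \emph{generic} cubic form. The obstruction is removed by the ``at least half of inputs are zeros'' property emphasised above: the specific form $Q$ coming from $\#3SAT$ satisfies $2\#Q - 2^{n'+m'} = 2^{m'}\#c \geq 0$, so choosing the nonnegative square root is unambiguous. Since $\#3SAT$ is itself $\#P$-hard, this polynomial-time reduction yields the corollary.
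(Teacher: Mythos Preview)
Your proposal is correct and follows essentially the same route as the paper: reduce $\#3SAT$ to counting zeros of the auxiliary cubic form $Q$, apply the Kirby--Melvin construction to obtain $M_Q$ with $\tau_4(M_Q)=2\#Q-2^{n'+m'}$, and use the ``at least half the inputs are zeros'' property of $Q$ to kill the sign ambiguity introduced by $\tv_{4,1}=|\tau_4|^2$. Your write-up is in fact more explicit than the paper's, carrying out the arithmetic $2\#Q-2^{n'+m'}=2^{m'}\#c$ and spelling out how $\#c$ is recovered from $\tv_{4,1}(M_Q)$; the underlying argument is the same.
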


\section{A fixed-parameter tractable algorithm}
\label{sec:fpt}

Here, we present an explicit fixed-parameter algorithm for computing
Turaev-Viro invariants $\tv_{r,q}$ for fixed $r$.
As is common for treewidth-based methods, the algorithm involves
dynamic programming over a tree decomposition $(X,\{B_\tau\})$.
We first describe the data that we compute and store at each bag
$B_\tau$, and then give the algorithm itself.

Our first step is to reorganise the formula for $\tv_{r,q}(\tri)$
to be a product over tetrahedra only.  This makes it easier to work with
``partial colourings'' corresponding to 
triangulation edges.

\begin{definition}
    Let $\tri$ be a generalised triangulation of a 3-manifold,
    and let $V$, $E$, $F$ and $T$ denote the vertices, edges, triangles
    and tetrahedra of $\tri$ respectively.
    For each vertex $x \in V$,
    each edge $x \in E$ and each triangle
    $x \in F$, we arbitrarily choose some tetrahedron $\Delta(x)$
    that contains $x$.

    Now consider the definition of $\tv_{r,q}(\tri)$.
    For each admissible colouring $\theta\co E \to I$
    and each tetrahedron $t \in T$, we define the
    \emph{adjusted tetrahedron weight} $|t|'_{\theta}$:
    \[ |t|'_\theta = |t|_\theta \times
        \prod_{\stackrel{v \in V}{\Delta(v)=t}} |v|_\theta \times
        \prod_{\stackrel{e \in E}{\Delta(e)=t}} |e|_\theta \times
        \prod_{\stackrel{f \in F}{\Delta(f)=t}} |f|_\theta.
    \]
    It follows from equation~(\ref{eqn-cweight}) that
    the full weight of the colouring $\theta$ is just
    \[ |\tri|_\theta = \prod_{t \in T} |t|'_\theta. \]
\end{definition}

\begin{notation}
    Let $X$ be a rooted tree.
    For any non-root node $\tau$ of $X$,
    we denote the parent node of $\tau$ by $\parent{\tau}$.
    For any two nodes $\sigma,\tau$ of $X$,
    we write $\sigma \desc \tau$ if $\sigma$ is a descendant node of $\tau$.
\end{notation}

\begin{definition}
    Let $\tri$ be a generalised triangulation of a 3-manifold,
    and let $V$, $E$, $F$ and $T$ denote the vertices, edges, triangles
    and tetrahedra of $\tri$ respectively.
    Let $(X, \{B_\tau\})$ be a nice tree decomposition of $\tri$.
    For each node $\tau$ of the rooted tree $X$, we define the following sets:
    \begin{itemize}
        \item $T_\tau \subseteq T$ is the set of all tetrahedra that appear in
        bags beneath $\tau$ but not in the bag $B_\tau$ itself.  More formally:
        $T_\tau = (\bigcup_{\sigma \desc \tau} B_\sigma ) \backslash B_\tau$.
        \item $F_\tau \subseteq F$ is the set of all triangles
        that appear in some tetrahedron $t \in T_\tau$.
        \item $E_\tau \subseteq E$ is the set of all edges
        that appear in some tetrahedron $t \in T_\tau$.
        \item $\curr_\tau \subseteq E_\tau$ is the set of all edges
        that appear in some tetrahedron $t \in T_\tau$
        and also some other tetrahedron $t' \notin T_\tau$;
        we refer to these as the \emph{current edges} at node $\tau$.
    \end{itemize}
\end{definition}

We can make the following immediate observations:

\begin{lemma}
    If $\tau$ is a leaf of the tree $X$, then we have
    $T_\tau = F_\tau = E_\tau = \curr_\tau = \emptyset$.
    If $\tau$ is the root of the tree $X$, then we have
    $T_\tau = T$, $F_\tau = F$, $E_\tau = E$, and $\curr_\tau = \emptyset$.
\end{lemma}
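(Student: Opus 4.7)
The plan is to verify each of the equalities directly from the definitions, treating the leaf case and the root case separately. Both cases are essentially bookkeeping, so the only subtlety is making sure the conventions line up, in particular that $\sigma \desc \tau$ means a \emph{strict} descendant (not $\tau$ itself).

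For a leaf $\tau$, I would first note that $\tau$ has no descendants, so $\bigcup_{\sigma \desc \tau} B_\sigma = \emptyset$ as a union over an empty index set. Hence $T_\tau = \emptyset \setminus B_\tau = \emptyset$. Since $F_\tau$, $E_\tau$, and $\curr_\tau$ are all defined in terms of the tetrahedra in $T_\tau$, they are immediately empty as well. (The nice-tree-decomposition condition that a leaf bag contains exactly one tetrahedron is irrelevant here, because that tetrahedron is explicitly excluded by the $\setminus B_\tau$.)

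For the root $\rho$, I would use the assumption that $B_\rho = \emptyset$ together with the defining property of a tree decomposition that every tetrahedron $t \in T$ lies in some bag $B_\tau$. Since $B_\rho$ is empty, every such bag must be at a strict descendant of $\rho$, so $\bigcup_{\sigma \desc \rho} B_\sigma = T$, and then $T_\rho = T \setminus \emptyset = T$. From this $F_\rho = F$ and $E_\rho = E$ follow because every triangle and every edge of $\tri$ lies in some tetrahedron, and every tetrahedron now belongs to $T_\rho$. Finally, $\curr_\rho = \emptyset$ because its definition requires an edge to appear in some $t' \notin T_\rho$, and no such $t'$ exists.

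The main (and only) obstacle is notational: I must be careful that ``$\sigma \desc \tau$'' is interpreted strictly, since otherwise the leaf computation would collapse. Given the earlier \texttt{Notation} block simply writes ``$\sigma$ is a descendant node of $\tau$'', I would add one explanatory sentence fixing this convention before the two case analyses, after which the lemma follows immediately by unpacking the definitions of $T_\tau, F_\tau, E_\tau, \curr_\tau$ in turn.
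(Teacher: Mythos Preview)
Your proof is correct, and the paper itself offers no proof at all: the lemma is introduced with the phrase ``We can make the following immediate observations'' and then left unproved. Your unpacking of the definitions is exactly what is needed to justify it.

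One small remark: your stated ``main obstacle'' is not actually an obstacle. Whether $\sigma \desc \tau$ is read strictly or reflexively, the leaf case still gives $T_\tau = \emptyset$: under the reflexive reading $\bigcup_{\sigma \desc \tau} B_\sigma = B_\tau$, and then $T_\tau = B_\tau \setminus B_\tau = \emptyset$. Likewise at the root, since $B_\rho = \emptyset$, including or excluding $\rho$ in the union changes nothing. So you need not add a sentence fixing the convention.
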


The key idea is, at each node $\tau$ of the tree, to store explicit
colours on the ``current'' edges $e \in \curr_\tau$ and to aggregate over all
colours on the ``finished'' edges $e \in E_\tau \backslash \curr_\tau$.
For this we need some further definitions and notation.

\begin{definition}
    Again let $\tri$ be a generalised triangulation of a 3-manifold,
    and let $(X, \{B_\tau\})$ be a nice tree decomposition of $\tri$.
    Fix some integer $r \geq 3$, and consider the set of colours
    $I = \{0, 1/2, 1, 3/2, \ldots, (r-2)/2\}$ as used in defining
    the Turaev-Viro invariants $\tv_{r,q}$.

    Let $\tau$ be any node of $X$.
    We examine ``partial colourings'' that only assign colours to the
    edges in $E_\tau$ or $\curr_\tau$:
    \begin{itemize}
        \item Consider any colouring $\theta \co E_\tau \to I$.
        We call $\theta$ \emph{admissible} if,
        for each triangle in $F_\tau$, the three edges $e,f,g$
        bounding the triangle yield an admissible triple
        $(\theta(e), \theta(f), \theta(g))$.

        \item Define $\Psi_\tau$ to be the set of all colourings
        $\psi \co \curr_\tau \to I$ that can be extended to
        \emph{any} admissible colouring $\theta \co E_\tau \to I$.

        \item Consider any colouring $\psi \in \Psi_\tau$
        (so $\psi \co \curr_\tau \to I$).
        We define the ``partial invariant''
        \[ \tv_{r,q}(\tri,\tau,\psi) =
            \sum_{\begin{subarray}{c}\theta\ \mathrm{admissible} \\
                  \theta = \psi\ \mathrm{on}\ \curr_\tau \end{subarray}}
                  \ \prod_{t \in T_\tau} |t|'_\theta. \]
    \end{itemize}
\end{definition}

Essentially, the partial invariant $\tv_{r,q}(\tri,\tau,\psi)$
considers all admissible ways $\theta$ of extending the colouring $\psi$
from the current edges $\curr_\tau$ to also include the ``finished''
edges in $E_\tau$, and then sums the partial weights
$|t|'_\theta$ for all such extensions $\theta$
using only the tetrahedra in $T_\tau$.

%
%

We can now give our full fixed-parameter tractable algorithm for
$\tv_{r,q}$.

\begin{algorithm} \label{a-fpt}
    Let $\tri$ be a generalised triangulation of a 3-manifold.
    We compute $\tv_{r,q}(\tri)$ for given values of $r$ and $q$ as follows.

    Build a \emph{nice} tree decomposition $(X, \{B_\tau\})$ of $\tri$.
    Then work through each node $\tau$ of $X$ from the leaves of $X$ to
    the root, and compute $\Psi_\tau$ and $\tv_{r,q}(\tri,\tau,\psi)$
    for each $\psi \in \Psi_\tau$ as follows.
    \begin{enumerate}
        \item If $\tau$ is a leaf bag, then $\curr_\tau = E_\tau = \emptyset$,
        $\Psi_\tau$ contains just the trivial colouring $\psi$ on $\emptyset$,
        and $\tv_{r,q}(\tri,\tau,\psi) = 1$.

        \item If $\tau$ is some other introduce bag with child node $\sigma$,
        then $T_\tau = T_\sigma$.
        This means that $\Psi_\tau = \Psi_\sigma$,
        and for each $\psi \in \Psi_\tau$ we have
        $\tv_{r,q}(\tri,\tau,\psi) = \tv_{r,q}(\tri,\sigma,\psi)$.

        \item If $\tau$ is a forget bag with child node $\sigma$, then
        $T_\tau = T_\sigma \cup \{t\}$ for the unique ``forgotten''
        tetrahedron $t \in B_\tau \backslash B_\sigma$.
        Moreover, $\curr_\tau$ extends $\curr_\sigma$ by including the
        six edges of $t$ (if they were not already present).

        For each colouring $\psi \in \Psi_\sigma$, enumerate all
        possible ways of colouring the six edges of $t$ that are
        consistent with $\psi$ on any edges of $t$ that already
        appear in $\curr_\sigma$, and are admissible on the four
        triangular faces of $t$.  Each such colouring on $t$ yields an
        extension $\psi' \co \curr_\tau \to I$ of
        $\psi \co \curr_\sigma \to I$.  We include $\psi'$ in $\Psi_\tau$,
        and record the partial invariant
        $\tv_{r,q}(\tri,\tau,\psi') = \tv_{r,q}(\tri,\sigma,\psi)$.

        \item If $\tau$ is a join bag with child nodes $\sigma_1,\sigma_2$,
        then $T_\tau$ is the disjoint union $T_{\sigma_1} \dotcup T_{\sigma_2}$.
        Here $\curr_\tau$ is a subset of
        $\curr_{\sigma_1} \cup \curr_{\sigma_2}$.

        For each pair of colourings $\psi_1 \in \Psi_{\sigma_1}$ and
        $\psi_2 \in \Psi_{\sigma_2}$, if $\psi_1$ and $\psi_2$ agree on
        the common edges in $\curr_{\sigma_1} \cap \curr_{\sigma_2}$ then
        record the pair $(\psi_1,\psi_2)$.

        Each such pair yields a ``combined colouring'' in $\Psi_\tau$, which we
        denote by $\psi_1 \cdot \psi_2 \co \curr_\tau \to I$;
        note that different pairs $(\psi_1,\psi_2)$ might yield the same
        colouring $\psi_1 \cdot \psi_2$ since some
        edges from $\curr_{\sigma_1} \cup \curr_{\sigma_2}$
        might not appear in $\curr_\tau$.
        Then $\Psi_\tau$ consists of all such combined colourings
        $\psi_1 \cdot \psi_2$ from recorded pairs $(\psi_1,\psi_2)$.
        Moreover, for each combined colouring $\psi \in \Psi_\tau$ we
        compute the partial invariant $\tv_{r,q}(\tri,\tau,\psi)$
        by aggregating over all duplicates:
        \[ \tv_{r,q}(\tri,\tau,\psi) =
        \sum_{\begin{subarray}{c}
        (\psi_1,\psi_2)\ \mathrm{recorded} \\
        \psi_1 \cdot \psi_2 = \psi
        \end{subarray}}
        \tv_{r,q}(\tri,\sigma_1,\psi_1) \cdot
        \tv_{r,q}(\tri,\sigma_2,\psi_2). \]
    \end{enumerate}

    Once we have processed the entire tree,
    the root node $\rho$ of $X$ will have
    $\curr_\rho = \emptyset$,
    $\Psi_\rho$ will contain just the trivial colouring
    $\psi$ on $\emptyset$, and
    $\tv_{r,q}(\tri,\rho,\psi)$ for this trivial colouring will be
    equal to the Turaev-Viro invariant $\tv_{r,q}(\tri)$.
\end{algorithm}


The analysis of the time complexity of this algorithm is straightforward.
Each leaf bag or introduce bag can be processed in $O(1)$ time
(of course for the introduce bag we must avoid a deep copy of the
data at the child node).
Each forget bag produces $|\Psi_\tau| \leq (r-1)^{|\curr_\tau|}$ colourings,
each of which takes $O(|\curr_\tau|)$ time to analyse.

Na{\"i}vely, each join bag requires us to process
$|\Psi_{\sigma_1}| \cdot |\Psi_{\sigma_2}| \leq
(r-1)^{|\curr_{\sigma_1}| + |\curr_{\sigma_2}|}$ pairs of
colourings $(\psi_1,\psi_2)$.  However, we can optimise this.
Since we are only interested in
colourings that agree on $\curr_{\sigma_1} \cap \curr_{\sigma_2}$,
we can first partition $\Psi_{\sigma_1}$ and $\Psi_{\sigma_2}$ into buckets
according to the colours on $\curr_{\sigma_1} \cap \curr_{\sigma_2}$,
and then combine pairs from each bucket individually.
This reduces our work to processing at most
$(r-1)^{|\curr_{\sigma_1} \cup \curr_{\sigma_2}|}$ pairs overall.
Each pair takes $O(|\curr_\tau|)$ time to process, and
the preprocessing cost for partitioning $\Psi_{\sigma_i}$ is
$O\left(|\Psi_{\sigma_i}| \cdot \log |\Psi_{\sigma_i}| \cdot
|\curr_{\sigma_i}|\right) =
O\left((r-1)^{|\curr_{\sigma_i}|} \cdot
|\curr_{\sigma_i}|^2 \log r\right)$.

Suppose that our tree decomposition has width $k$.
At each tree node $\tau$, every edge in $\curr_\tau$ must belong to some
tetrahedron in the bag $B_\tau$, and so $|\curr_\tau| \leq 6(k+1)$.
Likewise, at each join bag described above,
every edge in $\curr_{\sigma_1}$ or $\curr_{\sigma_2}$ must belong to
some tetrahedron in the bag $B_{\sigma_i}$ and therefore also
the parent bag $B_\tau$, and so
$|\curr_{\sigma_1} \cup \curr_{\sigma_2}| \leq 6(k+1)$.
From the discussion above, it follows that every bag can be processed in time
$O\left((r-1)^{6(k+1)} \cdot k^2 \log r\right)$, and so:

\begin{theorem} \label{t-fpt}
    Given a generalised triangulation $\tri$ of a 3-manifold with $n$
    tetrahedra, and a nice tree decomposition of $\tri$ with width $k$
    and $O(n)$ bags, Algorithm~\ref{a-fpt} computes $\tv_{r,q}(\tri)$
    in 
    $O\left(n \cdot (r-1)^{6(k+1)} \cdot k^2 \log r\right)$ arithmetic operations 
    in $\Q(\zeta)$.
\end{theorem}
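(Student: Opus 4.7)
The plan is to split the argument into a correctness part and a complexity part, with the latter essentially a bookkeeping aggregation of the per-bag counts already sketched just above the theorem. For correctness I would process the nice tree decomposition from leaves to root and prove by induction on $\tau$ the invariant: after node $\tau$ is processed, the algorithm has stored exactly the set $\Psi_\tau$, and for every $\psi \in \Psi_\tau$ the recorded value equals the partial invariant $\tv_{r,q}(\tri,\tau,\psi)$. Once this is established at the root $\rho$, the identities $B_\rho=\emptyset$, $T_\rho=T$, $\curr_\rho=\emptyset$ (immediate from the lemma following the definitions) force $\Psi_\rho$ to consist only of the empty colouring $\psi_0$; by the definition of the partial invariant and the reorganisation of $|\tri|_\theta$ as $\prod_{t\in T}|t|'_\theta$, the stored value $\tv_{r,q}(\tri,\rho,\psi_0)$ is then exactly $\tv_{r,q}(\tri)$.

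For the inductive step the leaf and introduce cases are bookkeeping: since $T_\tau=T_\sigma$, we also have $F_\tau=F_\sigma$, $E_\tau=E_\sigma$, $\curr_\tau=\curr_\sigma$, and the definition of the partial invariant is literally unchanged. The forget case is where the real update happens. Writing $t$ for the tetrahedron removed at $\tau$, we have $T_\tau=T_\sigma\cup\{t\}$ and $\curr_\tau$ extends $\curr_\sigma$ by the edges of $t$ not already present; enumerating colourings of these new edges while enforcing admissibility on the four faces of $t$ is exactly what is needed to pass from admissibility on $F_\sigma$ to admissibility on $F_\tau$, and one picks up the factor from including $t$ in the product over $T_\tau$. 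The join case exploits the product structure of $|\tri|_\theta$: by the connectivity axiom of a tree decomposition $T_{\sigma_1}$ and $T_{\sigma_2}$ are disjoint, so $\prod_{t\in T_\tau}|t|'_\theta$ factors over the two children, and the sum over admissible extensions factors as a sum over pairs of extensions agreeing on the common edges in $\curr_{\sigma_1}\cap \curr_{\sigma_2}$, which is exactly what the bucketing step of the algorithm computes.

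For the complexity I would assemble the bag-level estimates sketched before the theorem. The width bound gives $|B_\tau|\le k+1$, hence $|\curr_\tau|\le 6(k+1)$ at every node, so $|\Psi_\tau|\le (r-1)^{6(k+1)}$. Leaf and introduce bags cost $O(1)$ (the introduce case must avoid a deep copy of the child data); forget bags cost $O((r-1)^{6(k+1)}\cdot k)$; and, after bucketing $\Psi_{\sigma_1}$ and $\Psi_{\sigma_2}$ by their restrictions to $\curr_{\sigma_1}\cap \curr_{\sigma_2}$, join bags cost $O((r-1)^{6(k+1)}\cdot k^2\log r)$. Summing over the $O(n)$ bags of the nice decomposition yields the claimed total. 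The step I expect to be the main obstacle is the forget-bag case of the inductive correctness argument: one must verify carefully that enforcing admissibility only on the four faces of the forgotten tetrahedron $t$, together with the inductive admissibility on $F_\sigma$, is \emph{equivalent} to admissibility on the whole of $F_\tau$, and that every edge of $t$ that still appears in some tetrahedron outside $T_\tau$ is correctly promoted into $\curr_\tau$. Both points reduce to the connectivity property of tree decompositions applied to the tetrahedra containing a given face or edge, but making this bookkeeping watertight is the heart of the proof.
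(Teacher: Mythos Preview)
Your proposal is correct and matches the paper's approach: the complexity argument you give is exactly the per-bag accounting the paper carries out in the paragraphs immediately preceding the theorem (leaf/introduce $O(1)$, forget $O((r-1)^{|\curr_\tau|}\cdot|\curr_\tau|)$, join reduced by bucketing to $O((r-1)^{|\curr_{\sigma_1}\cup\curr_{\sigma_2}|}\cdot k^2\log r)$, then the bound $|\curr_\tau|\le 6(k+1)$ and summation over $O(n)$ bags). The paper in fact gives no separate correctness proof at all---it leaves the inductive invariant implicit in the definitions of $\Psi_\tau$ and $\tv_{r,q}(\tri,\tau,\psi)$---so your explicit induction on the tree, and your identification of the forget-bag bookkeeping as the point requiring care, goes beyond what the paper writes out but is entirely in the same spirit.
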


Theorem~\ref{t-fpt} shows that, for fixed $r$, if we can keep the treewidth
small then computing $\tv_{r,q}$ becomes linear time, even
for large inputs. This of course is the main benefit of
fixed-parameter tractability.  In our setting, however, we have an
added advantage: $\tv_{r,q}$ is a topological invariant, and does
not depend on our particular choice of triangulation.

Therefore, if we are faced with a large treewidth triangulation,
we can \emph{retriangulate} the manifold (for instance, using bistellar
flips and related local moves), in an attempt to
make the treewidth smaller.  This is extremely effective in practice, as
seen in Section~\ref{s-expt}.

Even if the treewidth is large, every tree node $\tau$ satisfies
$|\curr_\tau| \leq \ell$, where $\ell$ is the number of edges in the triangulation.
Therefore the time complexity of Algorithm~\ref{a-fpt} reduces to
$O\left(n \cdot (r-1)^{\ell} \cdot \ell^2 \log r\right)$, which is only
a little slower than the backtracking algorithm (Lemma~\ref{lem:cpxbacktracking}).
This is in sharp contrast to many FPT algorithms from the literature,
which---although fast for small parameters---suffer from extremely
poor performance when the parameter becomes large.

\section{Implementation and experimentation} \label{s-expt}

Here we implement Algorithm~\ref{a-fpt} (the fixed-parameter tractable
algorithm), and subject both it and the backtracking algorithm to
exhaustive experimentation.

The FPT algorithm is implemented in the open-source software package
\emph{Regina} \cite{regina}: the source code is available from
\emph{Regina}'s public git repository, and will be included in the
next release.  For consistency we compare it to \emph{Regina}'s
long-standing implementation of the backtracking algorithm.\footnote{%
    The \emph{Manifold Recogniser} \cite{recogniser} also implements a
    backtracking algorithm, but it is not open-source and so comparisons
    are more difficult.}

In our implementation, we do not compute treewidths precisely (an
NP-complete problem)---instead, we implement the quadratic-time
\texttt{GreedyFillIn} heuristic \cite{bodlaender10-upper},
which is reported to produce small widths in practice
\cite{vandijk06-libtw}. This way, costs of building tree
decompositions are insignificant (but included in the running times).  
For both algorithms, we use
relatively na{\"i}ve implementations of arithmetic in cyclotomic
fields---these are asymptotically slower than described in
Section~\ref{s-alg}, but have very small constants.

We use two data sets for our experiments, both taken from large
``census databases'' of 3-manifolds to ensure that the
experiments are comprehensive and not cherry-picked.

The first census contains all $13\,400$ closed prime orientable
manifolds that can be formed from
$n \leq 11$ tetrahedra \cite{burton11-genus,matveev03-algms}.
This simulates ``real-world'' computation---the Turaev-Viro invariants
were used to build this census.
Since the census includes all minimal triangulations of these manifolds,
we choose the representative whose heuristic tree decomposition
has smallest width (since we are allowed to retriangulate). 

\begin{figure}[t]
    \includegraphics[scale=0.5]{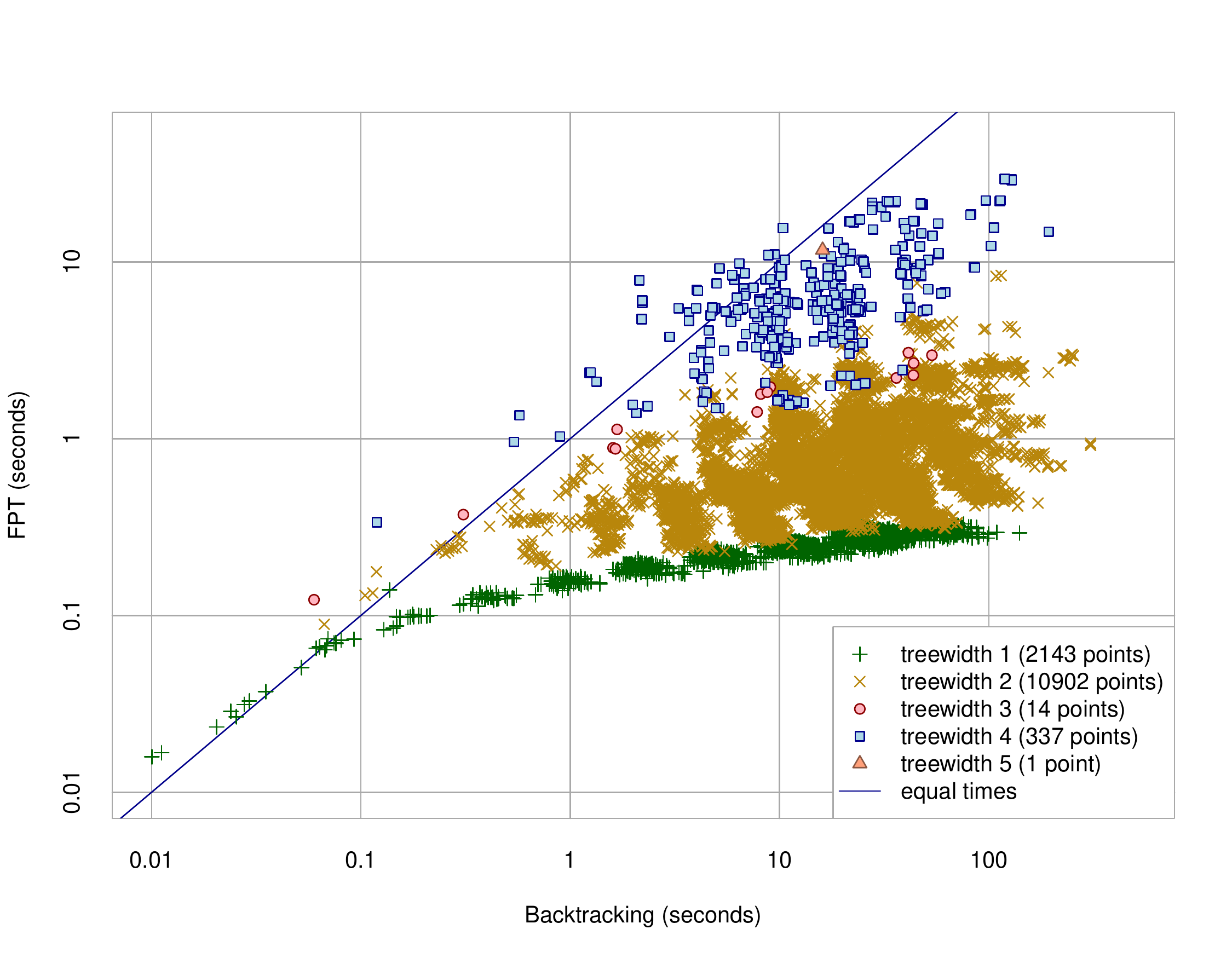}
    \caption{Running times for the 13\,400 closed census manifolds}
    \label{fig-census}
\end{figure}

The second data set contains the first $500$ (much larger) triangulations from 
the Hodgson-Weeks census of closed hyperbolic manifolds
\cite{hodgson94-closedhypcensus}.  This shows performance on larger
triangulations, with $n$ ranging from $9$ to $20$.

\begin{figure}[t]
    \includegraphics[scale=0.5]{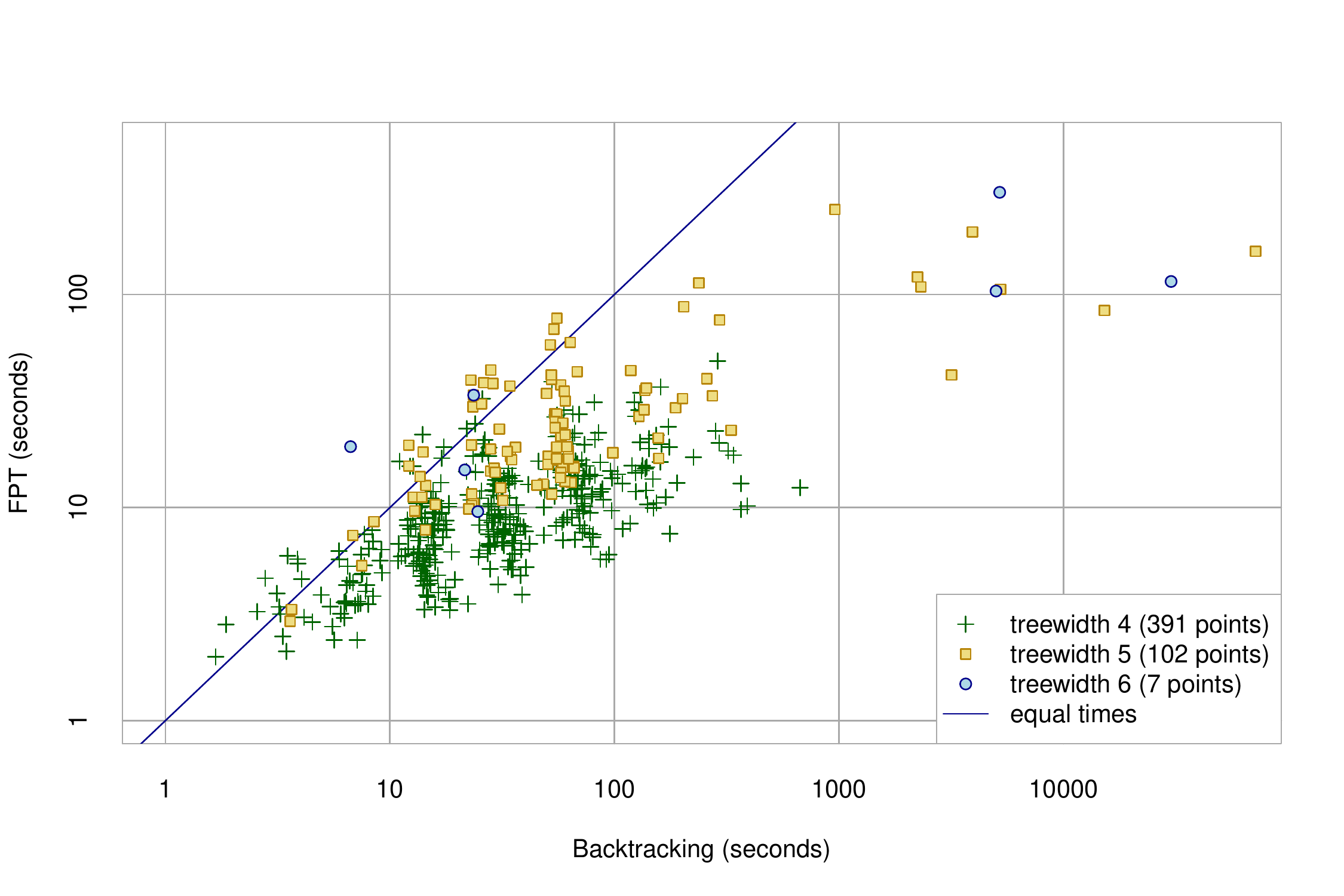}
    \caption{Running times for the first 500 Hodgson-Weeks census manifolds}
    \label{fig-hw}
\end{figure}

Figures~\ref{fig-census} and \ref{fig-hw}
compare the performance of both algorithms for each data set.  
All running times are for $\tv_{7,1}$
(the largest $r$ for which the experiments were feasible), and are
measured on a single 3\,GHz Intel Core i7 CPU.
Both plots use a log-log scale with
one data point per input triangulation.  The results are striking: the
FPT algorithm runs faster in over $99\%$ of cases, including most of the
cases with largest treewidth.
In the worst example the FPT
algorithm runs $3.7\,\times$ slower than the backtracking, but both data
sets have examples that run $>440\,\times$ faster.
It is also pleasing to see a clear impact of the treewidth
on the performance of the FPT algorithm, as one would expect.

\section{An alternate geometric interpretation}

In this section, we give a geometric interpretation of admissible 
colourings on a triangulation of a $3$-manifold $\tri$ in terms 
of {\em normal arcs}, i.e., straight lines in the interior of a triangle
which are pairwise disjoint and meet the edges of a triangle, but not its 
vertices (see Figure~\ref{fig:normalarcs}).
More precisely, we have the following

\begin{theorem}
    \label{thm:normalarcs}
    Given a $3$-manifold triangulation $\tri$, and $r \geq 3$,
    an admissible colouring of the edges of $\tri$ with $r-1$ colours 
    corresponds to a system of normal arcs
    in the 2-skeleton with $\leq r -2 $ arcs per 
    triangle forming a collection of cycles on the boundary of each tetrahedron
    of $\tri$.
\end{theorem}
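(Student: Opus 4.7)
The plan is to build an explicit bijection between admissible colourings of $\tri$ and systems of normal arcs, using the classical ``normal coordinate'' formulas that parameterise arcs on a triangle by their endpoint counts.

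First I would set up the correspondence on a single triangle. Fix a triangle $f$ with edges $e_1, e_2, e_3$ and opposite vertices $v_1, v_2, v_3$. A system of normal arcs in $f$ is determined by three non-negative integers $x_1, x_2, x_3$, where $x_s$ counts arcs separating $v_s$ from the opposite edge $e_s$. The number of arc endpoints on edge $e_s$ is then $x_r + x_t$ for $\{r,s,t\}=\{1,2,3\}$. Given an admissible colouring with $\theta(e_s) = i_s$, I would set $x_s = i_r + i_t - i_s$, so that the arc-count on $e_s$ equals $2 i_s$.

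Next I would check that the three admissibility conditions translate exactly into the constraints on $x_1, x_2, x_3$. The triangle inequalities are equivalent to $x_s \geq 0$; the parity condition $i_1 + i_2 + i_3 \in \Z$ is precisely what makes $x_s = (i_1+i_2+i_3) - 2 i_s$ an integer (since $2 i_s \in \Z$ already); and the upper bound constraint $i_1+i_2+i_3 \leq r-2$ is precisely the statement that the total arc count $x_1+x_2+x_3$ per triangle is at most $r-2$.

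Then I would verify the cycles condition on each tetrahedron. Because the arc-count on any edge $e$ is $2\theta(e)$, depending only on the colour of $e$, the two triangles of a tetrahedron sharing $e$ deposit matching numbers of endpoints on $e$, so arcs glue across $e$ to give a boundaryless 1-manifold on the tetrahedron's boundary 2-sphere, i.e., a collection of cycles. Conversely, starting from a system of normal arcs with the cycle property, I would read off the arc-count on each edge and define $\theta(e)=n(e)/2$; reversing the arithmetic above recovers an admissible colouring, provided $n(e)$ is well-defined globally.

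The main obstacle is precisely this final well-definedness: in a generalised triangulation an edge $e$ may be incident to many tetrahedra and many triangles, and the local cycle condition only directly matches arc-counts across the two triangles of a single tetrahedron. I would handle this by observing that two consecutive tetrahedra around $e$ share a triangle containing $e$, so the local matching at each tetrahedron propagates a common value $n(e)$ to all triangles around $e$. Once this is in place the remainder of the verification is a routine unpacking of the formulas above.
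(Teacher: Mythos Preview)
Your proposal is correct and in fact cleaner than the paper's argument. The paper proceeds constructively: it sets $\phi(e)=2\theta(e)$, orders $\phi(e_1)\geq\phi(e_2)\geq\phi(e_3)$, draws arcs greedily between pairs of edges, and then performs a replacement procedure to absorb the remaining unmatched intersections (using parity to ensure they come in pairs and the triangle inequality to ensure the process terminates). You instead write down the closed-form normal-coordinate solution $x_s = i_r + i_t - i_s$ and read off the three admissibility conditions directly as $x_s \in \Z$, $x_s \geq 0$, and $\sum x_s \leq r-2$. Your route is shorter and makes the bijection transparent; the paper's route is more hands-on but arrives at the same (unique) arc system. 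You also go further than the paper in two respects: you spell out the converse direction (arcs back to colourings), which the paper's proof does not explicitly do despite the word ``corresponds''; and you flag and resolve the well-definedness issue for $n(e)$ in a generalised triangulation by propagating the arc count around the edge link, which the paper's proof elides by implicitly arguing only at the level of a single tetrahedron boundary.
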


\begin{proof}
  Following the definition of an admissible colouring from Section~\ref{ssec:tv},
  the colours of the edges $e_1$, $e_2$, $e_3$ of a triangle $f$ of $\tri$ must satisfy
  the parity condition, the triangle inequalities, and the upper bound constraint.

  For a colouring $\theta(e)$ of an edge $e$, we define $\phi(e) = 2\theta(e)$ which is an 
  integer; we also use the term "colouring" for $\phi$. 
  We interpret the colourings $\phi(e_1),\phi(e_2),\phi(e_3) \in \{0, 1, \ldots , r-2\}$
  as the number of intersections of normal arcs with the respective edges of
  the triangulation (see Figure~\ref{fig:normalarcs}). Without loss of 
  generality, let 
  $\phi(e_1) \geq \phi(e_2) \geq \phi(e_3)$. We construct a system of normal arcs 
  by first drawing $\phi(e_2)$ arcs between edge $e_1$ and $e_3$ and 
  $\phi(e_1) - \phi(e_2)$ arcs between edge $e_1$ and $e_3$. This is always possible
  since $\phi(e_1) \leq \phi(e_2) + \phi(e_3)$ by the triangle inequality. Furthermore,
  the parity condition ensures that an even number of unmatched 
  intersections remains which, by construction, all have to be on edge $e_3$. If this 
  number is zero we are done. Otherwise we start replacing normal arcs
  between $e_1$ and $e_2$ by pairs of normal arcs, one between $e_1$ and $e_3$ and 
  one between $e_2$ and $e_3$ (see Figure~\ref{fig:normalarcs}). In each step, the number of unmatched intersection points 
  decreases by two. By the assumption $\phi(e_2) \geq \phi(e_3)$, this 
  yields a system of normal arcs in $f$ which leaves no intersection on the
  boundary edges unmatched. This system of normal arcs
  is unique for each admissible triple of colours.
  By the upper bound constraint, we get at most $r-2$ normal arcs on $f$.

  Looking at the boundary of a tetrahedron $t$ of $\tri$ these normal arcs
  form a collection of closed cycles. To see this, note that each intersection
  point of a normal arc in a triangle with an edge is part of exactly one 
  normal arc in that triangle and
  that there are exactly two triangles sharing a given edge.
\end{proof}

\begin{figure}
  \begin{center}
    \includegraphics[width = .4 \textwidth]{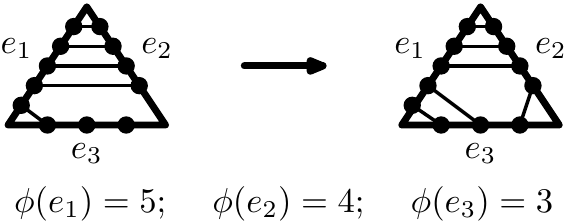}
  \end{center}
  \caption{Constructing a system of normal arcs from edge colourings. \label{fig:normalarcs}}
\end{figure}

Now, let $\tri$ be a closed $n$-tetrahedron $3$-manifold triangulation, 
$t$ a tetrahedron 
of $\tri$, $f_1$ and $f_2$ two triangles of $t$ with common edge $e$ of
colour $\phi(e)$, and $a_i$ and $b_i$ the respective non-negative numbers of the
two normal arc types in $f_i$ meeting $e$, $i \in \{1,2\}$.
Since the system of normal arcs on $t$ forms a collection of 
cycles on the boundary of $t$, we must have
 $a_1 + b_1 = a_2 + b_2 \leq r-2$,
giving rise to a total of $6n$ linear equations and $12n$ linear inequalities
on $6n$ variables which all admissible colourings on $\tri$ must satisfy.
Thus, finding admissible colourings on $\tri$ translates to the enumeration
of integer lattice points within the polytope defined by the above equalities
and inequalities.

Now, if we drop the upper bound constraint above,
we get a cone. Computing the Hilbert basis 
of integer lattice points of this cone yields
a finite description of all admissible colourings for any $r \geq 3$ and,
thus, the essential information to compute $\tv_{r,q} (\tri)$ for arbitrary
$r$. Transforming this approach into a practical algorithm
is work in progress.

\section{Acknowledgement}

This work is supported by the Australian Research Council
(projects DP1094516, DP140104246).

\bibliographystyle{plain}
\bibliography{pure}

\end{document}